\def\im{\mathrm{i}}
\def\d{\mathrm{d}}
\def\e{\mathrm{e}}
\def\erfc{\operatorname{erfc}}
\def\erf{\operatorname{erf}}
\numberwithin{equation}{section}
\newtheorem{Theorem}{Theorem}[section]
\newtheorem*{Theorem*}{Theorem}
\newtheorem{Lemma}[Theorem]{Lemma}
\newtheorem{Proposition}[Theorem]{Proposition}
 { \theoremstyle{definition}

\newtheorem{Remark}[Theorem]{Remark} }
\begin{document}
\allowdisplaybreaks

\renewcommand{\thefootnote}{}

\newcommand{\arXivNumber}{2401.16671}

\renewcommand{\PaperNumber}{026}

\FirstPageHeading

\ShortArticleName{Resurgence in the Transition Region: The Incomplete Gamma Function}

\ArticleName{Resurgence in the Transition Region:\\ The Incomplete Gamma Function\footnote{This paper is a~contribution to the Special Issue on Asymptotics and Applications of Special Functions in Memory of Richard Paris. The~full collection is available at \href{https://www.emis.de/journals/SIGMA/Paris.html}{https://www.emis.de/journals/SIGMA/Paris.html}}}

\Author{Gerg\H{o} NEMES}

\AuthorNameForHeading{G.~Nemes}

\Address{Department of Physics, Tokyo Metropolitan University,\\ 1--1 Minami-osawa, Hachioji-shi, Tokyo, 192-0397, Japan}
\Email{\href{mailto:nemes@tmu.ac.jp}{nemes@tmu.ac.jp}}

\ArticleDates{Received January 31, 2024, in final form March 24, 2024; Published online March 31, 2024}

\Abstract{We study the resurgence properties of the coefficients $C_n(\tau)$ appearing in the asymptotic expansion of the incomplete gamma function within the transition region. Our findings reveal that the asymptotic behaviour of $C_n(\tau)$ as $n\to +\infty$ depends on the parity of $n$. Both $C_{2n-1}(\tau)$ and $C_{2n}(\tau)$ exhibit behaviours characterised by a leading term accompanied by an inverse factorial series, where the coefficients are once again $C_{2k-1}(\tau)$ and $C_{2k}(\tau)$, respectively. Our derivation employs elementary tools and relies on the known resurgence properties of the asymptotic expansion of the gamma function and the uniform asymptotic expansion of the incomplete gamma function. To the best of our knowledge, prior to this paper, there has been no investigation in the existing literature regarding the resurgence properties of asymptotic expansions in transition regions.}

\Keywords{asymptotic expansions; incomplete gamma function; resurgence; transition regions}

\Classification{34E05; 33B20}

\renewcommand{\thefootnote}{\arabic{footnote}}
\setcounter{footnote}{0}

\section{Introduction and main result}

Resurgence refers to the phenomenon wherein the ``late'' coefficients and remainder terms in an asymptotic expansion can be re-expanded as generalised asymptotic expansions, with their coefficients corresponding to the ``early" coefficients in the original expansion. Initially observed by Dingle \cite{Dingle1973} and further developed by \'Ecalle \cite{Ecalle1981,Ecalle1981-II,Ecalle1981-III} in his theory of resurgent functions, this phenomenon has been subsequently identified in a broad class of asymptotic expansions. Examples include asymptotic expansions for integrals with saddles \cite{Bennett2018,Berry1991,Howls1997}, integrals with coalescing saddles \cite{OldeDaalhuis2000}, ordinary differential equations with irregular singularities at infinity \cite{Murphy1997, OldeDaalhuis1998b}, nonlinear ordinary differential equations \cite{OldeDaalhuis2005a,OldeDaalhuis2005b}, and second-order difference equations \cite{OldeDaalhuis2004}.

In cases where a problem involves additional parameters alongside the asymptotic variable, the coefficients of the asymptotic expansion can become singular as these parameters approach specific critical values, termed transition points. From a numerical perspective, the asymptotic expansion becomes impractical in an entire region surrounding a transition point. This region is known as a transition region, and its extent depends on the relative magnitudes of the corresponding parameter and the asymptotic variable, as well as the nature of the transition point. To address the computational challenge posed by transition regions, a commonly employed approach is the utilisation of uniform asymptotic expansions. Uniform asymptotic expansions are valid in large domains, including the transition region. However, the coefficients of such expansions are complicated functions with removable singularities, making them difficult to compute. Another solution is offered by the lesser-known transitional asymptotic expansions, providing an alternative approach. In contrast to their uniform counterparts, these expansions hold validity within smaller domains. However, they possess simple polynomial coefficients that are easy to compute. The region of validity for a transitional expansion is large enough to cover the transition region, making it a useful alternative to uniform expansions.

As far as we are aware, there has been no investigation in the current literature concerning the resurgence properties of transitional asymptotic expansions. This paper marks the beginning of research in this direction, with a specific emphasis on the transitional asymptotic expansion for the (normalised) incomplete gamma function $Q(a, z)$ \cite[Section~8.2\,(i)]{DLMF}. In this case, the role of the large asymptotic variable is played by $a$, while the relevant parameter in the problem is $\lambda=z/a$. The transition region, surrounding the transition point at $\lambda=1$, has a width of~$\mathcal{O}\big(|a|^{-1/2}\big)$, and within this region, the asymptotic behaviour of~$Q(a,z)$ undergoes an abrupt change. Specifically, $Q(a,z)$ demonstrates a sharp decay near the transition point $\lambda=1$, as it approaches unity for $\lambda<1$ and progressively decreases algebraically to zero for $\lambda>1$.

In the paper \cite{Nemes2019}, it was established that the incomplete gamma function admits the transitional asymptotic expansion
\begin{equation}\label{eq2}
Q\big(a,a+\tau a^{\frac{1}{2}}\big) \sim \frac{1}{2} \erfc\big(2^{-\frac{1}{2}}\tau\big) + \frac{1}{\sqrt {2\pi a}}
\exp\left(-\frac{\tau^2}{2}\right) \sum_{n=0}^\infty\frac{C_n(\tau)}{a^{n/2}}
\end{equation}
as $a\to \infty$ in the sector $|\arg a| \le \pi-\delta<\pi$, uniformly with respect to bounded complex values of~$\tau$. Here, $\erfc$ denotes the complementary error function \cite[equation~(2.2)]{DLMF}. The coefficients~$C_n(\tau)$ are polynomials in $\tau$ of degree $3n+2$ and satisfy
\begin{gather}
C_0 (\tau ) = \frac{1}{3}\tau^2-\frac{1}{3},\nonumber\\
C_n (\tau ) + \tau C'_n (\tau ) - C''_n (\tau ) = \tau \big(\tau ^2 - 2\big)C_{n - 1} (\tau ) - \big(2\tau^2 - 1\big)C'_{n - 1} (\tau ) + \tau C''_{n - 1} (\tau )\label{eq32}
\end{gather}
for $n\geq 1$. Additionally, the even- and odd-order polynomials are even and odd functions, respectively. An algorithm for generating these coefficients is discussed in Appendix \ref{appendixb}.

In this paper, we are interested in the asymptotic behaviour of the coefficients $C_n(\tau)$ as $n\to+\infty$. Our main result can be summarised as follows.

\begin{Theorem}\label{thm1} The coefficients $C_n(\tau)$ posses the inverse factorial series
\begin{align}
& C_{2n - 1} (\tau ) \sim \frac{\Gamma (n)}{(2\pi )^{n + 1/2}}\sin \left( \frac{n }{2}\pi \right)\exp \left( \frac{\tau ^2 }{2} \right)\erf\big( 2^{ - \frac{1}{2}} \tau \big) \nonumber\\
&\phantom{C_{2n - 1} (\tau ) \sim}{}- \frac{1}{\pi }\sum_{k = 1}^\infty C_{2k - 1} (\tau )\sin \left( \frac{n - k}{2}\pi \right)\frac{\Gamma (n - k)}{(2\pi )^{n - k} } ,\label{eq14}
\\ &
C_{2n} (\tau ) \sim - \frac{\Gamma \left( n + \frac{1}{2} \right)}{(2\pi )^{n + 1} }\sin \left( \left( n + \frac{1}{2} \right)\frac{\pi }{2} \right)\exp \left( \frac{\tau ^2}{2} \right) \nonumber\\
&\phantom{C_{2n} (\tau ) \sim}{}- \frac{1}{\pi }\sum\limits_{k = 0}^\infty C_{2k} (\tau )\sin \left( \frac{n - k}{2}\pi \right)\frac{\Gamma (n - k)}{(2\pi )^{n - k} }\label{eq15}
\end{align}
as $n\to +\infty$, uniformly with respect to bounded complex values of $\tau$. Here, $\erf$ denotes the error function {\rm\cite[equation~(7.2.1)]{DLMF}}. Moreover, under the condition that $\tau = o\big(n^{1/6}\big)$, these expansions hold as generalised asymptotic expansions {\rm\cite[Section~2.1\,(v)]{DLMF}} when $n\to +\infty$.
\end{Theorem}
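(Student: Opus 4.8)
The plan is to obtain the large-$n$ behaviour of $C_n(\tau)$ by transporting the resurgence of the gamma function through Temme's uniform asymptotic expansion of $Q(a,z)$, which is the natural bridge between the transitional expansion \eqref{eq2} and the Stirling series. I would begin from the uniform expansion
\[ Q(a,z) \sim \tfrac{1}{2}\erfc\big(\eta\sqrt{a/2}\big) + \frac{\exp\big({-}\tfrac{1}{2}a\eta^2\big)}{\sqrt{2\pi a}}\sum_{k=0}^\infty \frac{c_k(\eta)}{a^k}, \]
where $\tfrac{1}{2}\eta^2 = \lambda-1-\ln\lambda$, $\lambda=z/a$, and the $c_k(\eta)$ are analytic at the transition point $\eta=0$. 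Putting $z = a+\tau a^{1/2}$ makes $\eta$ a power series in $a^{-1/2}$, $\eta = \tau a^{-1/2}\big(1-\tfrac{1}{3}\tau a^{-1/2}+\cdots\big)$, so that equating the two expansions and collecting powers of $a^{-1/2}$ writes each $C_n(\tau)$ as an explicit finite combination of the Taylor coefficients $c_k^{(j)}(0)$ with $2k+j\le n$, together with the purely entire contribution coming from the difference $\erfc\big(\eta\sqrt{a/2}\big)-\erfc\big(2^{-1/2}\tau\big)$. Since the latter composes an entire function with a series of positive radius in $a^{-1/2}$, its coefficients grow at most geometrically and contribute only subdominantly; hence the factorial growth of $C_n(\tau)$ must originate in the highest admissible Temme coefficients, i.e.\ in the behaviour of $c_k(0),c_k'(0),\dots$ as $k\to+\infty$.

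The engine is then the resurgence of the Stirling series. Since the values $c_k^{(j)}(0)$ encode $\Gamma(a)$ at the saddle $z=a$, they are expressible through the coefficients $\gamma_k$ of the scaled gamma function $\Gamma^*(a)\sim\sum_k\gamma_k a^{-k}$, and the $\gamma_k$ re-expand self-referentially, their growth being controlled by the nearest Borel singularities at $\pm2\pi\im$: each $\gamma_k$ carries the factorial factor $\Gamma(k)(2\pi)^{-k}$ while the early $\gamma_m$ reappear as the coefficients of the accompanying inverse factorial series. Feeding this into the extraction formula of the first step, the conjugate pair $\pm2\pi\im$ combines into the real oscillations $\sin\big(\tfrac{n}{2}\pi\big)$, $\sin\big((n+\tfrac12)\tfrac{\pi}{2}\big)$ and $\sin\big(\tfrac{n-k}{2}\pi\big)$, while the square-root relation between $\eta$ and $\tau$ turns the integer-argument factor $\Gamma(k)$ into a factor with argument $\tfrac{m+1}{2}$, realised as $\Gamma(n)$ for the odd index $m=2n-1$ and $\Gamma(n+\tfrac12)$ for the even index $m=2n$; this is why a single underlying resurgence formula splits, according to the parity of $n$, into \eqref{eq14} and \eqref{eq15}. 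The self-referential structure reproduces, after resummation of the powers of $\tau$ against the $\eta$-dependence of the dominant amplitude, the early coefficients $C_{2k-1}(\tau)$ and $C_{2k}(\tau)$, and the amplitude of the leading singularity emerges as $\exp(\tau^2/2)\erf\big(2^{-1/2}\tau\big)$ and $\exp(\tau^2/2)$, whose parities (odd and even) match the respective indices.

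The hard part will be the uniform control in $\tau$, not the formal identification of the terms. Because the coefficient extraction couples the large index $n$ to the small quantity $\eta\sim\tau a^{-1/2}$, one cannot substitute fixed-$\eta$ asymptotics for the $c_k$; instead the estimates for $c_k(0),c_k'(0),\dots$ must be made uniform as $k\to+\infty$ while the number of retained Taylor terms grows with $n$, and the tail of each inverse factorial series must be bounded uniformly so that \eqref{eq14}--\eqref{eq15} may be read as genuine generalised asymptotic expansions. Tracking how the cubic correction in $\eta = \tau a^{-1/2}\big(1-\tfrac13\tau a^{-1/2}+\cdots\big)$ propagates through the resummation shows that the neglected contributions remain subdominant precisely while $\tau^3 = o\big(n^{1/2}\big)$, equivalently $\tau = o\big(n^{1/6}\big)$, which is exactly the threshold recorded in the statement; securing these uniform remainder bounds is where the genuine work lies.
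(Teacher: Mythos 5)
Your plan correctly identifies the two structural facts behind the result---that $C_n(\tau)$ is determined by the uniform-expansion data at the transition point, and that the factorial growth is inherited from the Stirling resurgence with Borel singularities at $\pm 2\pi\im$---but the central step is not available in the form you need it, and you do not supply it. Matching the transitional and uniform expansions writes $C_n(\tau)$ as a sum of Taylor coefficients $c_k^{(j)}(0)$ with $2k+j\le n$, so extracting the large-$n$ behaviour requires the asymptotics of $c_k^{(j)}(0)$ as $k\to+\infty$ \emph{uniformly} in $j$ up to $j\approx n-2k$, followed by a resummation over $j$ that is supposed to produce the amplitudes $\exp\bigl(\tau^2/2\bigr)\erf\bigl(2^{-1/2}\tau\bigr)$ and $\exp\bigl(\tau^2/2\bigr)$. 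The known resurgence formula \eqref{eq22} is a generalised asymptotic expansion of $c_k(\eta)$ for $\eta$ in a fixed region; a generalised asymptotic expansion cannot simply be differentiated $j$ times with $j$ growing, and no uniform-in-$j$ remainder bounds for $c_k^{(j)}(0)$ exist in the literature. Establishing them would amount to redoing the resurgence analysis of the uniform expansion with explicit error control---precisely the ``genuine work'' you defer---and the subsequent resummation of the incomplete-beta amplitudes in \eqref{eq22} against the powers of $\tau$ is itself a nontrivial computation that you only assert yields $\erf\bigl(2^{-1/2}\tau\bigr)$. Your heuristic for the threshold $\tau=o\bigl(n^{1/6}\bigr)$ is likewise a plausibility argument, not a derivation.

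The paper sidesteps all of this with a much more elementary device: differentiating $Q\bigl(a,a+\tau a^{1/2}\bigr)$ with respect to $\tau$ removes the $\erfc$ term and reduces the problem to the reciprocal scaled gamma function times an explicit elementary exponential, so the coefficients $D_n(\tau)=\tau C_n(\tau)-C_n'(\tau)$ become finite convolutions of Stirling coefficients with explicitly boundable polynomials $p_k(\tau)$ (see \eqref{eq4} and Lemmas \ref{lemma1} and \ref{lemma2}). The classical resurgence \eqref{eq18} of $\gamma_n$ then transfers to $D_n(\tau)$ with controlled remainders (Proposition \ref{prop1}); the condition $\tau=o\bigl(n^{1/6}\bigr)$ drops out of the bound $|p_k(\tau)|\le 4\bigl(|\tau|^3+1\bigr)^k$; and integrating back in $\tau$ via \eqref{eq7}--\eqref{eq8} produces the $\erf$ and Gaussian amplitudes for free. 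The uniform expansion is invoked only for the single integration constant $C_{2n}(0)=c_n(0)$, where the clean pointwise result \eqref{eq19} suffices. If you wish to pursue your route, the honest prerequisite is a uniform resurgence theorem for the Taylor coefficients of $c_k(\eta)$ at $\eta=0$, which is not currently proved anywhere.
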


Hence, the coefficients $C_n(\tau)$ exhibit a resurgence property. Their asymptotic behaviour expressed as an inverse factorial series, a characteristic feature in resurgence theory, with the coefficients in these series being once again the coefficients $C_k(\tau)$. As observed, the form of the inverse factorial series for $C_n(\tau)$ depends on the parity of $n$. Specifically, the expansion coefficients for odd $n$ comprise the $C_k(\tau)$ with odd $k$, while those for even $n$ involve the $C_k(\tau)$ with even $k$. Additionally, it is worth noting that the leading term is absent in the expansion of~$C_n(\tau)$ when $n \equiv 3 \bmod{4}$.

While the structure of the transitional expansion mirrors that of the corresponding uniform expansion (compare \eqref{eq2} and \eqref{eq23}), the resurgence properties of the former are notably simpler than those of the latter. The asymptotics of the coefficients in the uniform asymptotic expansion of $Q(a,z)$ are described in terms of the incomplete beta function, which is more intricate than the error function appearing in the expansion of $C_{2n - 1}(\tau)$. We suspect that this phenomenon holds more generally, such as when considering integrals with coalescing saddles. In such cases, the asymptotic behaviour of the uniform expansion coefficients is no longer represented by inverse factorial series but involves generalised asymptotic expansions with complicated functions. Therefore, from a resurgent perspective, transitional expansions seem to exhibit a much closer resemblance to simple (non-uniform) asymptotic expansions than to uniform ones.

A primary tool for deriving asymptotic expansions of coefficients in asymptotic series is the Borel transform. In the context of simple (non-uniform) asymptotic expansions, the Borel transform is a well-established technique that yields analytic functions with simple branch points. The asymptotic behaviour of the coefficients is subsequently obtained through the application of Darboux's method. This paper takes a different approach, employing elementary tools and drawing on the well-established resurgence properties of the asymptotic expansion of the gamma function, as well as the uniform asymptotic expansion of the incomplete gamma function. Details on these expansions are given in Appendix \ref{appendixa}.

The remaining part of the paper is structured as follows. In Section \ref{section2}, we present the proof of the main result. Section \ref{section3} offers numerical examples that demonstrate the applicability of the results. The paper concludes with a brief discussion in Section \ref{section4}.

\section{Proof of the main result}\label{section2}

The essence of proving our main result lies in relating the coefficients $C_n(\tau)$ to some coefficients~$D_n(\tau)$, whose asymptotic behaviour is easier to analyse using the known resurgence properties of the asymptotic expansion of the gamma function.

In the following discussion, we will assume that $|\tau| < |a|^{1/2}$ and $|\arg a| < \pi$. Utilising the definitions of the incomplete gamma function $Q(a,z)$ and the scaled gamma function $\Gamma^\ast (a)$ (refer to \eqref{eq24}), we can readily deduce that
\begin{align}
\frac{\partial Q\big(a,a + \tau a^{\frac{1}{2}}\big)}{\partial \tau} & = - \frac{a^{\frac{1}{2}} \e^{ - a}}{\Gamma (a)}\big( a + \tau a^{\frac{1}{2}} \big)^{a - 1} \e^{ - \tau a^{\frac{1}{2}} } = - \frac{1}{\sqrt{2\pi }}\frac{1}{\Gamma^\ast (a)}\big( 1 + \tau a^{ - \frac{1}{2}}\big)^{a - 1} \e^{ - \tau a^{\frac{1}{2}} }\nonumber \\ & = - \frac{1}{\sqrt {2\pi }}\exp\left( - \frac{\tau^2}{2} \right)\frac{1}{\Gamma^\ast (a)}\e^{F(a,\tau )} ,\label{eq1}
\end{align}
where
\[
F(a,\tau ) = \frac{\tau ^2}{2} + (a - 1)\log \big(1 + \tau a^{-\frac{1}{2}}\big) - \tau a^{\frac{1}{2}} .
\]
Using the Maclaurin series of the logarithm, we derive the expansion
\[
F(a,\tau ) = \sum_{n = 1}^\infty \left( \frac{1}{n}-\frac{\tau^2}{n + 2}\right)\frac{( - \tau )^n}{a^{n/2}} .
\]
By exponentiating both sides and expanding the right-hand side in negative powers of $a^{-1/2}$, we~obtain
\[
\e^{F(a,\tau )} = \sum_{n = 0}^\infty \frac{p_n (\tau )}{a^{n/2} },
\]
where $p_n(\tau)$ is a polynomial in $\tau$ of degree $3n$. By substituting this expansion into \eqref{eq1} and employing the asymptotic series \eqref{eq25} of the reciprocal scaled gamma function, we deduce the asymptotic expansion
\[
\frac{\partial Q\big(a,a + \tau a^{\frac{1}{2}}\big)}{\partial \tau} \sim - \frac{1}{\sqrt{2\pi}}\exp \left(- \frac{\tau^2}{2}\right)\left( 1 + \sum_{n = 0}^\infty \frac{D_n (\tau )}{a^{(n + 1)/2}} \right)
\]
as $a\to \infty$ in the sector $|\arg a| \le \pi-\delta<\pi$, uniformly with respect to bounded complex values of~$\tau$. The coefficients $D_n(\tau)$ are expressed in terms of the Stirling coefficients $\gamma_n$ (refer to Appendix~\ref{appendixa}) and the polynomials $p_n(\tau)$ as follows:
\begin{equation}\label{eq4}
D_{2n - 1} (\tau ) = \sum_{k = 0}^n \gamma _{n - k} p_{2k} (\tau ) ,\qquad D_{2n} (\tau ) = \sum_{k = 0}^n \gamma _{n - k} p_{2k + 1} (\tau ).
\end{equation}
We note that $D_n(\tau)$ is a polynomial in $\tau$ of degree $3n+3$. On the other hand, differentiating each side of \eqref{eq2} with respect to $\tau$ yields the asymptotic expansion
\[
\frac{\partial Q\big(a,a + \tau a^{\frac{1}{2}}\big)}{\partial \tau} \sim - \frac{1}{\sqrt {2\pi }}\exp \left( - \frac{\tau^2}{2}\right)\left( 1 + \sum_{n = 0}^\infty \frac{\tau C_n (\tau ) - C'_n (\tau )}{a^{(n + 1)/2}} \right)
\]
as $a\to \infty$ in the sector $|\arg a| \le \pi-\delta<\pi$, uniformly with respect to bounded complex values of~$\tau$. Therefore, by invoking the uniqueness property of the coefficients in an asymptotic expansion, we deduce the following relation between the coefficients $C_n(\tau)$ and $D_n(\tau)$: $\tau C_n (\tau ) - C'_n (\tau ) = D_n (\tau )$. Solving for $C_n(\tau)$ gives the following expressions:
\begin{equation}\label{eq7}
C_{2n-1} (\tau ) = -\exp \left(\frac{\tau^2}{2} \right)\int_0^\tau \exp \left(- \frac{t^2 }{2}\right)D_{2n-1} (t)\d t,
\end{equation}
and
\begin{equation}\label{eq8}
C_{2n} (\tau ) = \exp \left(\frac{\tau^2}{2}\right) C_{2n} (0) - \exp \left( \frac{\tau^2}{2}\right)\int_0^\tau \exp \left(-\frac{t^2}{2}\right)D_{2n} (t)\d t.
\end{equation}
Here, we utilised the fact that $C_{2n-1} (\tau )$, being an odd function, satisfies $C_{2n-1} (0)=0$.

In the following proposition, we present detailed information regarding the asymptotic behaviour of $D_n(\tau)$ as $n$ becomes large, expressed as a truncated inverse factorial series along with a remainder. This result, in combination with \eqref{eq7} and \eqref{eq8}, will be used to establish our main result. In this paper, empty sums are interpreted as zero, and subscripts in the $\mathcal{O}$ notations indicate the dependence of the implied constant on certain parameters.

\begin{Proposition}\label{prop1} For any non-negative integer $N$, the coefficients $D_n(\tau)$ exhibit the following asymptotic behaviours:
\begin{align}
D_{2n - 1} (\tau ) ={}& -\frac{1}{\pi}\frac{\Gamma (n)}{(2\pi )^n } \sin \left( \frac{n}{2}\pi \right) -\frac{1}{\pi}\sum_{k = 1}^{N - 1} D_{2k - 1} (\tau )\sin \left( \frac{n - k}{2}\pi \right)\frac{\Gamma (n - k)}{(2\pi )^{n - k}}\nonumber \\ & +\mathcal{O}_N (1)\bigl(\left| \tau \right|^3 + 1\bigr)^{2N} \exp \big( 2\pi \bigl(\left| \tau \right|^3 + 1\bigr)^2 \big)\frac{\Gamma (n - N)}{(2\pi )^{n - N}}\label{eq10}
\end{align}
and
\begin{align}
D_{2n} (\tau ) ={}& -\frac{1}{\pi}\sum_{k = 0}^{N - 1} D_{2k} (\tau )\sin \left( \frac{n - k}{2}\pi \right)\frac{\Gamma (n - k)}{(2\pi )^{n - k}} \nonumber\\ & +\mathcal{O}_N (1)\bigl(\left| \tau \right|^3 + 1\bigr)^{2N+1} \exp \big( 2\pi \bigl(\left| \tau \right|^3 + 1\bigr)^2 \big)\frac{\Gamma (n - N)}{(2\pi )^{n - N}}\label{eq11}
\end{align}
as $n\to+\infty$, provided $\tau = o\big(n^{1/6}\big)$.
\end{Proposition}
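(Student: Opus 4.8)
The plan is to exploit the convolution structure of \eqref{eq4}: each $D_{2n-1}(\tau)$ (resp.\ $D_{2n}(\tau)$) is a finite Cauchy product of the Stirling coefficients $\gamma_m$ with the polynomials $p_{2k}(\tau)$ (resp.\ $p_{2k+1}(\tau)$), and the resurgence of the $\gamma_m$ is already available from Appendix~\ref{appendixa} in the form of a truncated inverse factorial series
\[
\gamma_m = -\frac{1}{\pi}\sum_{i = 0}^{M - 1}\gamma_i \sin\left(\frac{m - i}{2}\pi\right)\frac{\Gamma(m - i)}{(2\pi)^{m - i}} + \mathcal{O}\!\left(\frac{\Gamma(m - M)}{(2\pi)^{m - M}}\right),
\]
with $\gamma_0 = 1$. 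The first move is to substitute this expansion (with $m = n - k$ and a suitable $M \geq N$) into \eqref{eq4}, which is legitimate whenever the argument $n - k$ is large, and to peel off the finitely many terms with $n - k$ small as a separate tail.

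The decisive step is then a reindexing of the resulting double sum by the total shift $\ell = i + k$. For each fixed $\ell$, the inner sum over the two original indices is precisely the convolution that defines a lower coefficient: by \eqref{eq4} one has $\sum_{i + k = \ell}\gamma_i p_{2k}(\tau) = D_{2\ell - 1}(\tau)$ in the odd case and $\sum_{i + k = \ell}\gamma_i p_{2k + 1}(\tau) = D_{2\ell}(\tau)$ in the even case. This collapses the double sum back into an inverse factorial series whose coefficients are once again the $D_{2\ell - 1}(\tau)$, resp.\ $D_{2\ell}(\tau)$, reproducing the stated shapes \eqref{eq10} and \eqref{eq11}. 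In the odd case the term $\ell = 0$ corresponds to $D_{-1}(\tau) = \gamma_0 p_0(\tau) = 1$ (since $p_0 \equiv 1$), which is $\tau$-independent and yields exactly the explicit leading term $-\frac{1}{\pi}\frac{\Gamma(n)}{(2\pi)^n}\sin\!\left(\frac{n}{2}\pi\right)$; in the even case the $\ell = 0$ term is $D_0(\tau) = p_1(\tau)$, a genuine polynomial, which is why no separate leading term is extracted in \eqref{eq11}.

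I expect the main obstacle to be the rigorous control of the remainder, which is exactly where the hypothesis $\tau = o\big(n^{1/6}\big)$ enters. Two error contributions must be estimated: the tail of the convolution where $n - k$ is too small for the $\gamma$-expansion to apply, and the accumulated $\mathcal{O}(\cdot)$ remainders produced by substituting the $\gamma$-expansion into each summand. Both require a quantitative growth bound for the polynomials $p_n(\tau)$. Since $p_n(\tau)$ is the coefficient of $a^{-n/2}$ in the expansion of the generating function $\e^{F(a,\tau)}$ in powers of $a^{-1/2}$, whose only singularity is the branch point of the logarithm, a Cauchy estimate yields a bound of the form $|p_{2k}(\tau)| \leq (|\tau|^3 + 1)^{2k}/k!$, consistent with $p_{2k}$ having degree $6k$. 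Feeding this into the remainder, factoring out $\Gamma(n - N)/(2\pi)^{n - N}$ and bounding the residual gamma-quotients crudely, the sum over $k$ takes the shape $\sum_k (2\pi(|\tau|^3 + 1)^2)^k/k! = \exp\big(2\pi(|\tau|^3 + 1)^2\big)$, which accounts for the exponential factor in \eqref{eq10} and \eqref{eq11}, while the power $(|\tau|^3 + 1)^{2N}$ (resp.\ $(|\tau|^3 + 1)^{2N + 1}$) reflects the size of the leading omitted polynomial $p_{2N}(\tau)$ (resp.\ $p_{2N + 1}(\tau)$). Finally, the constraint $\tau = o\big(n^{1/6}\big)$ guarantees that $(|\tau|^3 + 1)^2 = o(n)$, so that the factorial decay $\Gamma(n - N)/(2\pi)^{n - N}$ genuinely dominates the polynomial growth and the displayed expansions hold in the asserted asymptotic sense.
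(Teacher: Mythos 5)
Your proposal follows essentially the same route as the paper: truncate the convolution \eqref{eq4}, bound the tail using $|\gamma_m|\le\Gamma(m)/(2\pi)^m$ together with a Cauchy-type bound on $p_k(\tau)$, substitute the truncated inverse factorial series \eqref{eq18} for $\gamma_{n-k}$, and reindex the double sum so that the inner convolutions reassemble into $D_{2\ell-1}(\tau)$, resp.\ $D_{2\ell}(\tau)$, with the $\ell=0$ term producing the explicit leading term in the odd case. The only discrepancy is your claimed bound $|p_{2k}(\tau)|\le (|\tau|^3+1)^{2k}/k!$: a Cauchy estimate on the circle $|t|=1/(|\tau|^3+1)$ yields only $|p_k(\tau)|\le 4(|\tau|^3+1)^{k}$ (the paper's Lemma~\ref{lemma1}), and the factor $1/k!$ that ultimately produces the exponential $\exp\big(2\pi(|\tau|^3+1)^2\big)$ comes instead from the inequality $\Gamma(n-N-k)\,k!\le\Gamma(n-N)$ applied to the gamma quotients, not from the polynomial bound.
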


In order to prove Proposition \ref{prop1}, we need to establish some lemmas.

\begin{Lemma}\label{lemma1} For every non-negative integer $k$ and any complex number $\tau$, the inequality
\[
\left| p_k (\tau )\right| \le 4 \bigl(\left| \tau \right|^3 + 1\bigr)^k
\]
is satisfied.
\end{Lemma}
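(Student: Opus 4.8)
The plan is to treat $x=a^{-1/2}$ as a formal variable, so that $p_k(\tau)$ is precisely the coefficient of $x^k$ in $\exp(F)$, where $F(x)=\sum_{n\ge1}f_n x^n$ with $f_n=\big(\frac1n-\frac{\tau^2}{n+2}\big)(-\tau)^n$. The whole statement then reduces to a majorant (Cauchy) estimate for the Taylor coefficients of the exponential of a power series whose coefficients admit an explicit bound. Writing $s=|\tau|$ and using the triangle inequality together with $\frac{1}{n+2}\le\frac1n$, one gets $|f_n|\le\frac{1+s^2}{n}s^n=:\hat f_n$ for every $n\ge1$. The virtue of this particular majorant is that $\hat F(x):=\sum_{n\ge1}\hat f_n x^n=-(1+s^2)\log(1-sx)$ exponentiates in closed form, $\exp(\hat F(x))=(1-sx)^{-(1+s^2)}$, whose $x^n$-coefficient is the explicit nonnegative number $\hat p_n=\binom{s^2+n}{n}s^n$.

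Next I would transfer the coefficientwise domination $|f_n|\le\hat f_n$ to the exponentials. The cleanest elementary route avoids combinatorial formulas and uses instead the differential recurrence coming from $\frac{d}{dx}\exp(F)=F'\exp(F)$, namely $n\,p_n=\sum_{m=1}^n m f_m\,p_{n-m}$ with $p_0=1$, together with the identical recurrence satisfied by $\hat p_n$. A straightforward induction on $n$, using $|f_m|\le\hat f_m$ and the nonnegativity $\hat f_m\ge0$, $\hat p_j\ge0$, then yields $|p_n|\le\hat p_n$ for all $n$.

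The final and only genuinely computational step is to check $\hat p_n=\binom{s^2+n}{n}s^n\le4(s^3+1)^n$. I would factor the ratio as a product, $\frac{\hat p_n}{(s^3+1)^n}=\prod_{j=1}^n\frac{(s^2+j)s}{j(s^3+1)}$, and bound each factor separately. For $j\ge2$ the factor is at most $1$, because $(s^2+j)s\le j(s^3+1)$ is equivalent to the elementary polynomial inequality $(j-1)s^3\ge j(s-1)$, which holds for all $s\ge0$ (trivially when $s\le1$, and for $s>1$ since $(j-1)s^3-j(s-1)=j(s^3-s+1)-s^3$ is positive already at $j=2$ and increasing in $j$). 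The single remaining factor $\frac{(s^2+1)s}{s^3+1}=\frac{s^3+s}{s^3+1}$ is at most $2$, since $s\le s^3+2$ for $s\ge0$. Hence the product is at most $2\le4$, giving the claim with room to spare; the base cases $k=0,1$ are in any event immediate.

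The main obstacle is locating the right majorant: the bound $|f_n|\le\frac{1+s^2}{n}s^n$ must be tight enough that $\exp(\hat F)$ still grows no faster than $(s^3+1)^n$, yet simple enough that $\hat F$ has a closed-form exponential. Once $\hat F=-(1+s^2)\log(1-sx)$ is chosen everything else is routine, and the factor-by-factor product estimate is precisely what produces the clean constant.
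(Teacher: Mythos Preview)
Your argument is correct. The induction via the recurrence $n\,p_n=\sum_{m=1}^n m f_m p_{n-m}$ transfers the coefficientwise domination $|f_n|\le\hat f_n$ to $|p_n|\le\hat p_n$ exactly as you say, and the product estimate $\prod_{j=1}^n\frac{(s^2+j)s}{j(s^3+1)}\le 2$ is checked cleanly (the $j\ge 2$ factors are $\le 1$ by the polynomial inequality you verified, and the $j=1$ factor is $\frac{s^3+s}{s^3+1}\le 2$).

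Your route is genuinely different from the paper's. The paper applies Cauchy's integral formula on the circle $|t|=r$ with the specific radius $r=\bigl(|\tau|^3+1\bigr)^{-1}$, bounds the integrand $\exp(F)$ pointwise on that circle by the numerical constant~$4$ (via the crude bound $\bigl|\frac1n-\frac{\tau^2}{n+2}\bigr|\le 1+\frac{|\tau|^2}{3}$ and a geometric-series computation), and reads off $|p_k(\tau)|\le 4r^{-k}$. In contrast, you never leave the level of formal power series: you dominate each coefficient $f_n$ by $\hat f_n=\frac{1+s^2}{n}s^n$, recognise the majorant $\hat F$ as $-(1+s^2)\log(1-sx)$ so that $\exp(\hat F)$ has explicit binomial coefficients, and then bound those combinatorially. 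The paper's approach is shorter and exploits complex analysis directly; your approach is entirely real-variable and elementary, and in fact gives the slightly sharper constant~$2$ for $k\ge 1$. Both hinge on essentially the same crude estimate of the $f_n$, but package the passage from $F$ to $\exp(F)$ differently: Cauchy's formula versus the exponential-of-a-series recurrence.
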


\begin{proof}
Starting from the expansion defining the polynomials $p_k(\tau)$ and using Cauchy's formula, we find that
\begin{equation}\label{eq3}
p_k (\tau ) = \frac{1}{2\pi \im}\oint_{\left| t \right| = r} \exp \bigg( \sum_{n = 1}^\infty \left( \frac{1}{n} - \frac{\tau ^2 }{n + 2} \right)( - \tau t)^n \bigg)\frac{\d t}{t^{k + 1}} ,
\end{equation}
where $r = \frac{1}{\left| \tau \right|^3 + 1}$. Assuming $\left|t\right| = \frac{1}{\left| \tau \right|^3 + 1}$, we can bound the exponential part as follows:
\begin{align*}
\left| \exp \bigg( \sum_{n = 1}^\infty \left( \frac{1}{n} - \frac{\tau ^2}{n + 2} \right)( - \tau t)^n \bigg) \right| & \le \exp \bigg( \bigg( \frac{\left| \tau \right|^2 }{3} + 1 \bigg)\sum_{n = 1}^\infty \left| \tau \right|^n \left| t \right|^n \bigg) \\ & = \exp \bigg( \bigg( \frac{\left| \tau \right|^2 }{3} + 1 \bigg)\sum_{n = 1}^\infty \bigg( \frac{\left| \tau \right|}{\left| \tau \right|^3 + 1} \bigg)^n \bigg) \\ & = \exp \bigg( \bigg( \frac{\left| \tau \right|^2 }{3} + 1 \bigg)\frac{\left| \tau \right|}{\left| \tau \right|^3 + 1 - \left| \tau \right|} \bigg) <4.
\end{align*}
Applying this bound in \eqref{eq3} and performing a straightforward estimation yields the desired result.
\end{proof}

\begin{Lemma}\label{lemma2} For every positive integer $n$, the inequality
\[
\left|\gamma_n\right| \le \frac{\Gamma(n)}{(2\pi)^n}
\]
is satisfied.
\end{Lemma}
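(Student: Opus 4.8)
The plan is to derive the bound from an exact integral representation of the Stirling coefficients, since the constant $2\pi$ is precisely the distance from the origin to the nearest singularities, located at $\pm 2\pi\im$, of the Borel transform underlying the asymptotic expansion of the gamma function. Drawing on the resurgence properties of the gamma function recorded in Appendix~\ref{appendixa}, I would express $\gamma_n$ as a Laplace-type integral in the form
\[
\gamma_n = \int_0^\infty \e^{-2\pi t}\, t^{n-1}\,\Omega(t)\,\d t ,
\]
in which the amplitude $\Omega$ is a single $n$-independent function, arising from the discontinuity of the Borel transform across the cuts issuing from $\pm 2\pi\im$, and the exponential weight decays at exactly the rate $2\pi$ set by those singularities. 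Equivalently, after the substitution $t\mapsto t/(2\pi)$ one pulls out the factor $(2\pi)^{-n}$ and works with the weight $\e^{-t}$ instead.

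Granting such a representation, the proof becomes a short estimate. The normalising integral is
\[
\int_0^\infty \e^{-2\pi t}\, t^{n-1}\,\d t = \frac{\Gamma(n)}{(2\pi)^n},
\]
which already accounts for the shape of the claimed bound, so it only remains to control the amplitude. Assuming the pointwise estimate $|\Omega(t)|\le 1$ for all $t\ge 0$, the triangle inequality yields
\[
|\gamma_n| \le \int_0^\infty \e^{-2\pi t}\, t^{n-1}\,|\Omega(t)|\,\d t \le \int_0^\infty \e^{-2\pi t}\, t^{n-1}\,\d t = \frac{\Gamma(n)}{(2\pi)^n},
\]
which is exactly the assertion of the lemma.

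The crux, and the step I expect to be the main obstacle, is therefore the $n$-uniform pointwise bound $|\Omega(t)|\le 1$. This requires an explicit enough description of $\Omega$ (essentially the jump of the Borel transform of the Stirling series across the imaginary axis) together with a genuine bound on its modulus, rather than merely its large-$t$ asymptotics. The bound $1$ appears to be comfortably safe: the ratios $|\gamma_n|(2\pi)^n/\Gamma(n)$ stay well below $1$ and settle near $1/\pi$ along the odd indices, consistent with $\Omega$ being bounded by roughly $1/\pi$ for large $t$. As a fallback, should a clean uniform amplitude bound prove awkward, the same inequality can instead be assembled from the known leading resurgence terms for $\gamma_n$ supplemented by the explicit error estimate in Appendix~\ref{appendixa}, verifying the finitely many small values of $n$ by direct computation.
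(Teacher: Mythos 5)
There is a genuine gap here, and you have in fact flagged it yourself: the entire argument rests on the pointwise bound $\left|\Omega(t)\right|\le 1$, which you assume rather than prove. Worse, the representation as written is unlikely to be salvageable in that exact form. The Borel singularities of the Stirling series sit at $\pm 2\pi\im k$, on the \emph{imaginary} axis, so the exact resurgence representation of $\gamma_n$ consists of two complex-conjugate contributions carrying factors $(\pm\im)^{-n}$; this is precisely what produces the period-four oscillation $\sin\bigl(\frac{n-k}{2}\pi\bigr)$ in \eqref{eq18}. A single real Laplace integral $\int_0^\infty \e^{-2\pi t}t^{n-1}\Omega(t)\,\d t$ with an $n$-independent real amplitude cannot reproduce that sign pattern. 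Moreover, the amplitude in the correct two-term representation is essentially $\Gamma^\ast(\pm\im t)$ (together with the contributions of the higher singularities at $\pm 2\pi\im k$, which is where the $\zeta(n)$ in Boyd's estimates originates), and $\Gamma^\ast(\im t)\sim(2\pi\im t)^{-1/2}$ as $t\to 0^+$, so the amplitude is \emph{unbounded} near the origin; a clean pointwise bound by $1$ fails there even though the resulting integral converges. So the ``crux'' you identify is not merely unproven --- in the form you state it, it is false.

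The paper avoids all of this by taking what is essentially your fallback route and making it rigorous in two lines: it quotes Boyd's explicit, computable bound \cite[equation~(3.11)]{Boyd1994} on the remainder of the Stirling series, $\left|a^nR_n(a)\right|\le\frac{1+\zeta(n)}{2\pi}\frac{\Gamma(n)}{(2\pi)^n}$ for $n\ge2$ and $a>0$, notes that $\left|\gamma_n\right|=\lim_{a\to+\infty}\left|a^nR_n(a)\right|$, and uses $\frac{1+\zeta(n)}{2\pi}\le\frac{1+\zeta(2)}{2\pi}<1$; the case $n=1$ is checked directly from $\left|\gamma_1\right|=\frac{1}{12}<\frac{1}{2\pi}$. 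If you want to keep your Borel-plane picture, you would essentially have to reprove Boyd's estimate, including a genuine bound on $\Gamma^\ast$ along the imaginary axis and a summation over all singularities; citing the published remainder bound is the efficient path.
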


\begin{proof} For the case $n=1$, the inequality holds trivially since
\[
\left| \gamma _1 \right| = \frac{1}{12} < \frac{1}{2\pi} = \frac{\Gamma (1)}{2\pi}.
\]
Now, assuming $n\ge 2$, let $R_n$ denote the error incurred by truncating the asymptotic expansion of the scaled gamma function after $n$ terms:
\[
\Gamma^\ast (a)=\sum_{k = 0}^{n - 1} (-1)^k \frac{\gamma _k}{a^k} + R_n (a).
\]
In the paper \cite[equation~(3.11)]{Boyd1994}, it was established that for $n\ge 2$ and $a>0$, the following inequality holds:
\[
\left| a^n R_n (a) \right| \le \frac{1 + \zeta (n)}{2\pi }\frac{\Gamma (n)}{(2\pi )^n}.
\]
Here, $\zeta$ denotes the Riemann zeta function \cite[equation~(25.2.1)]{DLMF}. Consequently,
\[
\left| \gamma_n \right| = \lim_{a \to + \infty } \left| a^n R_n (a) \right| \le \frac{1 + \zeta (n)}{2\pi }\frac{\Gamma (n)}{(2\pi )^n} \le \frac{\Gamma (n)}{(2\pi )^n}
\]
provided $n\ge 2$.
\end{proof}

\begin{proof}[Proof of Proposition \ref{prop1}.] We will prove the statement for the coefficients $D_{2n - 1} (\tau )$. The proof for $D_{2n} (\tau)$ follows in a completely analogous manner. Let $N$ be any non-negative integer, and express \eqref{eq4} in the following form:
\begin{equation}\label{eq20}
D_{2n - 1} (\tau ) = \sum_{k = 0}^{N - 1}\gamma _{n - k} p_{2k} (\tau ) + \sum_{k = N}^n \gamma _{n - k} p_{2k} (\tau ).
\end{equation}
We shall establish an asymptotic order estimate for the second sum as $n\to+\infty$. To do so, we first observe that if $q$ and $r$ are non-negative integers, then $q!r! \le (q + r)!$. Thus,
\[
\Gamma (n - N - k)k! = (n - N - 1 - k)!k! \le (n - N - 1)! = \Gamma (n - N)
\]
for integers $n$, $N$, and $k$ satisfying $n \ge N+k+1$ and $k\ge 0$. Employing this inequality in combination with those provided in Lemmas \ref{lemma1} and \ref{lemma2}, we deduce that
\begin{align*}
& \left| \sum_{k = N}^n \gamma _{n - k} p_{2k} (\tau ) \right| \\
&\qquad= \left| \sum_{k = N}^{n-1} \gamma _{n - k} p_{2k} (\tau ) + p_{2n} (\tau )\right| \le 4\sum_{k = N}^{n - 1} \frac{\Gamma (n - k)}{(2\pi )^{n - k} }\bigl(\left| \tau \right|^3 + 1\bigr)^{2k} + 4\bigl(\left| \tau \right|^3 + 1\bigr)^{2n}
\\ & \qquad= 4\frac{\Gamma (n - N)}{(2\pi )^n }\left( \sum_{k = N}^{n - 1} \frac{\Gamma (n - k)}{\Gamma(n - N)}\big(2\pi \bigl(\left| \tau \right|^3 + 1\bigr)^2 \big)^k + \frac{\big(2\pi \bigl(\left| \tau \right|^3 + 1\bigr)^2 \big)^n }{\Gamma (n - N)} \right)
\\ & \qquad= 4\big(2\pi \bigl(\left| \tau \right|^3 + 1\bigr)^2 \big)^N \frac{\Gamma(n - N)}{(2\pi )^n }\\
&\phantom{\qquad=}{}\times\left( \sum_{k = 0}^{n - N - 1} \frac{\Gamma (n - N - k)}{\Gamma (n - N)}\big(2\pi \bigl(\left| \tau \right|^3 + 1\bigr)^2 \big)^k + \frac{\big(2\pi \bigl(\left| \tau \right|^3 + 1\bigr)^2 \big)^{n - N} }{\Gamma(n - N)} \right)
\\ & \qquad\le 4\big(2\pi \bigl(\left| \tau \right|^3 + 1\bigr)^2 \big)^N \frac{\Gamma(n - N)}{(2\pi )^n }\left( \sum_{k = 0}^{n - N - 1} \frac{\big(2\pi \bigl(\left| \tau \right|^3 + 1\bigr)^2 \big)^k }{k!} + \frac{\big(2\pi \bigl(\left| \tau \right|^3 + 1\bigr)^2 \big)^{n - N} }{\Gamma(n - N)}\right)
\\ & \qquad\le 4\big(2\pi \bigl(\left| \tau \right|^3 + 1\bigr)^2 \big)^N \frac{\Gamma(n - N)}{(2\pi )^n }\left( \exp \big( 2\pi \bigl(\left| \tau \right|^3 + 1\bigr)^2 \big) + \frac{\big(2\pi \bigl(\left| \tau \right|^3 + 1\bigr)^2 \big)^{n - N} }{\Gamma(n - N)} \right)
\\ &\qquad = \mathcal{O}_N (1)\bigl(\left| \tau \right|^3 + 1\bigr)^{2N} \exp \big( 2\pi \bigl(\left| \tau \right|^3 + 1\bigr)^2 \big)\frac{\Gamma(n - N)}{(2\pi )^{n - N} }
\end{align*}
as $n\to+\infty$, provided $\tau = o\big(n^{1/6}\big)$. The condition $\tau = o\big(n^{1/6}\big)$ on $\tau$ is imposed to ensure that
\[
\frac{\big(2\pi \bigl(\left| \tau \right|^3 + 1\bigr)^2 \big)^{n - N} }{\Gamma(n - N)} = \mathcal{O}_N (1) \exp \big( 2\pi \bigl(\left| \tau \right|^3 + 1\bigr)^2 \big).
\]
We have thus established that, for any fixed non-negative integer $N$,
\begin{equation}\label{eq5}
D_{2n - 1} (\tau ) = \sum_{k = 0}^{N - 1}\gamma _{n - k} p_{2k} (\tau ) + \mathcal{O}_N (1)\bigl(\left| \tau \right|^3 + 1\bigr)^{2N} \exp \big( 2\pi \bigl(\left| \tau \right|^3 + 1\bigr)^2 \big)\frac{\Gamma(n - N)}{(2\pi )^{n - N} }
\end{equation}
as $n\to+\infty$, provided $\tau = o\big(n^{1/6}\big)$.

Continuing, our aim now is to derive a precise asymptotic expression for the sum in \eqref{eq5} for large $n$, using the known inverse factorial series of the Stirling coefficients. Referring to \eqref{eq18}, we can assert that for each fixed non-negative integer $N$:
\[
\gamma _n = -\frac{1}{\pi}\sum_{m = 0}^{N- 1} \gamma _m \sin\left( \frac{n - m}{2}\pi \right)\frac{\Gamma (n - m)}{(2\pi )^{n - m}} + \mathcal{O}_N(1) \frac{\Gamma (n - N)}{(2\pi )^{n - N} }
\]
as $n\to+\infty$. Applying this expression to the sum in \eqref{eq5}, we infer that
\begin{align*}
\sum_{k = 0}^{N - 1} \gamma _{n - k} p_{2k} (\tau ) = & -\sum_{k = 0}^{N - 1} \frac{1}{\pi}\sum_{m = 0}^{N - 1} \gamma _m \sin \left( \frac{n - k - m}{2}\pi \right)\frac{\Gamma (n - k - m)}{(2\pi )^{n - k - m} } p_{2k} (\tau ) \\ & + \sum_{k = 0}^{N - 1} \mathcal{O}_N(1) \frac{\Gamma (n - k - N)}{(2\pi )^{n - k - N} }p_{2k} (\tau )
\\ = &-\frac{1}{\pi }\sum_{k = 0}^{N - 1} \left( \sum_{j = 0}^k \gamma _{k - j} p_{2j} (\tau ) \right)\sin \left( \frac{n - k}{2}\pi \right)\frac{\Gamma (n - k)}{(2\pi )^{n - k} } \\ &- \frac{1}{\pi }\sum_{k = N}^{2N- 2} \left( \sum_{j = k - N + 1}^{N-1} \gamma _{k - j} p_{2j} (\tau ) \right)\sin \left( \frac{n - k}{2}\pi\right)\frac{\Gamma (n - k)}{(2\pi )^{n - k}}
\\ & + \sum_{k = 0}^{N - 1} \mathcal{O}_N(1) \frac{\Gamma (n - k - N)}{(2\pi )^{n - k - N} }p_{2k} (\tau )
\end{align*}
as $n\to+\infty$, with any fixed non-negative integer $N$. From \eqref{eq4}, we can affirm that
\begin{align*}
&-\frac{1}{\pi }\sum_{k = 0}^{N - 1} \left( \sum_{j = 0}^k \gamma _{k - j} p_{2j} (\tau ) \right)\sin \left( \frac{n - k}{2}\pi \right)\frac{\Gamma (n - k)}{(2\pi )^{n - k} } \\ &\qquad= -\frac{1}{\pi}\frac{\Gamma (n)}{(2\pi )^n } \sin \left( \frac{n}{2}\pi \right)-\frac{1}{\pi}\sum_{k = 1}^{N - 1} D_{2k - 1} (\tau )\sin \left( \frac{n - k}{2}\pi \right)\frac{\Gamma (n - k)}{(2\pi )^{n - k}}.
\end{align*}
Moreover, it is evident that, for any fixed non-negative integer $N$,
\[
- \frac{1}{\pi }\sum_{k = N}^{2N-2} \left( \sum_{j = k - N + 1}^{N-1} \gamma _{k - j} p_{2j} (\tau ) \right)\sin \left( \frac{n - k}{2}\pi\right)\frac{\Gamma (n - k)}{(2\pi )^{n - k}}= \mathcal{O}_N (1)\bigl(\left| \tau \right|^3 + 1\bigr)^{2N} \frac{\Gamma (n - N)}{(2\pi )^{n - N}}
\]
and
\[
\sum_{k = 0}^{N - 1} \mathcal{O}_N(1) \frac{\Gamma (n - k - N)}{(2\pi )^{n - k - N} }p_{2k} (\tau )=\mathcal{O}_N (1)\bigl(\left| \tau \right|^3 + 1\bigr)^{2N} \frac{\Gamma (n - N)}{(2\pi )^{n - N}}
\]
as $n\to+\infty$, uniformly with respect to $\tau$. Consequently, in summary,
\begin{align*}
\sum_{k = 0}^{N - 1} \gamma _{n - k} p_{2k} (\tau ) ={}& -\frac{1}{\pi}\frac{\Gamma (n)}{(2\pi )^n } \sin \left( \frac{n}{2}\pi \right) -\frac{1}{\pi}\sum_{k = 1}^{N - 1} D_{2k - 1} (\tau )\sin \left( \frac{n - k}{2}\pi \right)\frac{\Gamma (n - k)}{(2\pi )^{n - k}} \\ & +\mathcal{O}_N (1)\bigl(\left| \tau \right|^3 + 1\bigr)^{2N} \frac{\Gamma (n - N)}{(2\pi )^{n - N}}
\end{align*}
as $n\to+\infty$, with any fixed non-negative integer $N$ and uniformly with respect to $\tau$. Inserting this asymptotic expression into \eqref{eq5} yields the desired result for $D_{2n - 1} (\tau )$.
\end{proof}

We are now in a position to prove Theorem \ref{thm1}.

\begin{proof}[Proof of Theorem \ref{thm1}.] By employing \eqref{eq10} in \eqref{eq7} with $t$ in place of $\tau$, integrating term-by-term, and utilising \eqref{eq7} for each term, we deduce
\begin{align*}
C_{2n - 1} (\tau ) ={}& \frac{\Gamma (n)}{(2\pi )^{n + 1/2} }\sin \left( \frac{n}{2}\pi \right)\exp \left( \frac{\tau ^2}{2} \right)\erf\big( 2^{ - \frac{1}{2}} \tau \big) \\
&- \frac{1}{\pi}\sum_{k = 1}^{N - 1} C_{2k - 1} (\tau )\sin \left( \frac{n - k}{2}\pi \right)\frac{\Gamma (n - k)}{(2\pi )^{n - k} }
\\ & - \exp \left( \frac{\tau ^2}{2} \right)\int_0^\tau \mathcal{O}_N (1)\exp \left( - \frac{t^2}{2} \right)\bigl(\left| t \right|^3 + 1\bigr)^{2N} \\
&\phantom{-}{}\times\exp \big( 2\pi \bigl(\left| t \right|^3 + 1\bigr)^2 \big)\d t \frac{\Gamma (n - N)}{(2\pi )^{n - N}}
\end{align*}
as $n\to+\infty$, provided that $N\ge 0$ is fixed and $\tau = o\big(n^{1/6}\big)$. Choosing the integration path in the error term to be a straight line segment connecting $0$ to $\tau$ and employing a straightforward estimation, we obtain the following bound:
\begin{align*}
& \left| \exp \left( \frac{\tau ^2}{2} \right)\int_0^\tau \mathcal{O}_N (1)\exp \left( - \frac{t^2}{2} \right)\bigl(\left| t \right|^3 + 1\bigr)^{2N} \exp \big( 2\pi \bigl(\left| t \right|^3 + 1\bigr)^2 \big)\d t \right|
\\ &\qquad \le \mathcal{O}_N (1)\bigl(\left| \tau \right|^3 + 1\bigr)^{2N} \exp \big( 2\pi \bigl(\left| \tau \right|^3 + 1\bigr)^2 \big) \int_0^{\left| \tau \right|} \left| \exp \left( \frac{\tau ^2 - t^2}{2} \right) \right|\d\left| t \right|
\\ & \qquad= \mathcal{O}_N (1) \left| \tau \right|\bigl(\left| \tau \right|^3 + 1\bigr)^{2N} \exp \big( 2\pi \bigl(\left| \tau \right|^3 + 1\bigr)^2 \big) \int_0^1 \exp \bigg( \frac{\left| \tau \right|^2}{2}(1 - s^2 )\cos (2\arg \tau ) \bigg)\d s
\\ & \qquad= \mathcal{O}_N (1)\left| \tau \right|\bigl(\left| \tau \right|^3 + 1\bigr)^{2N} \exp \big( 2\pi \bigl(\left| \tau \right|^3 + 1\bigr)^2 \big)\exp \bigg( \frac{\left| \tau \right|^2}{2} \bigg)
\\ & \qquad= \mathcal{O}_N (1)\left| \tau \right|\bigl(\left| \tau \right|^3 + 1\bigr)^{2N} \exp \big( 7\bigl(\left| \tau \right|^3 + 1\bigr)^2 \big).
\end{align*}
Accordingly,
\begin{align}
C_{2n - 1} (\tau ) ={}& \frac{\Gamma (n)}{(2\pi )^{n + 1/2} }\sin \left( \frac{n}{2}\pi \right)\exp \left( \frac{\tau ^2}{2} \right)\erf\big( 2^{ - \frac{1}{2}} \tau \big)\nonumber\\
& - \frac{1}{\pi}\sum_{k = 1}^{N - 1} C_{2k - 1} (\tau )\sin \left( \frac{n - k}{2}\pi \right)\frac{\Gamma (n - k)}{(2\pi )^{n - k} }\nonumber
\\ & +\mathcal{O}_N (1)\left| \tau \right|\bigl(\left| \tau \right|^3 + 1\bigr)^{2N} \exp \big( 7\bigl(\left| \tau \right|^3 + 1\bigr)^2 \big) \frac{\Gamma (n - N)}{(2\pi )^{n - N}}\label{eq12}
\end{align}
as $n\to+\infty$, under the condition that $N\ge 0$ is fixed and $\tau = o\big(n^{1/6}\big)$. In a similar fashion, by employing \eqref{eq8} and \eqref{eq11}, we find that for each fixed non-negative integer $N$:
\begin{align*}
C_{2n} (\tau ) ={}& \exp \left( \frac{\tau ^2}{2} \right)C_{2n} (0) - \frac{1}{\pi}\sum_{k = 0}^{N - 1} \left( C_{2k} (\tau ) - \exp \left( \frac{\tau ^2}{2} \right)C_{2k} (0) \right)\sin \left( \frac{n - k}{2}\pi \right)
\\ &\times\frac{\Gamma (n - k)}{(2\pi )^{n - k}} + \mathcal{O}_N (1)\left| \tau \right|\bigl(\left| \tau \right|^3 + 1\bigr)^{2N + 1} \exp \big( 7\bigl(\left| \tau \right|^3 + 1\bigr)^2 \big)\frac{\Gamma (n - N)}{(2\pi )^{n - N}}
\end{align*}
as $n\to+\infty$, provided $\tau = o\big(n^{1/6}\big)$. To simplify this expression, we can refer to the known inverse factorial series of the coefficients $c_n(\eta)$ that appear in the uniform asymptotic expansion of the incomplete gamma function $Q(a,z)$. Specifically, since $C_{2n}(0)=c_n(0)$, we can deduce from \eqref{eq19} that for each fixed non-negative integer $N$:
\begin{align*}
C_{2n} (0) ={}& - \frac{\Gamma \left( n + \frac{1}{2} \right)}{(2\pi )^{n + 1}}\sin \left( \left(n + \frac{1}{2}\right)\frac{\pi}{2} \right) - \frac{1}{\pi }\sum_{k = 0}^{N - 1} C_{2k} (0)\sin \left( \frac{n - k}{2}\pi \right)\frac{\Gamma (n - k)}{(2\pi )^{n - k}} \\ & + \mathcal{O}_N(1)\frac{\Gamma (n - N)}{(2\pi )^{n - N} }
\end{align*}
as $n\to+\infty$. As a result,
\begin{align}
C_{2n} (\tau ) ={}& -\frac{\Gamma \left( n + \frac{1}{2} \right)}{(2\pi )^{n + 1} }\sin \left( \left( n + \frac{1}{2} \right)\frac{\pi }{2} \right)\exp \left( \frac{\tau ^2}{2} \right) - \frac{1}{\pi }\sum\limits_{k = 0}^{N-1} C_{2k} (\tau )\sin \left( \frac{n - k}{2}\pi \right)\nonumber
\\ &\times\frac{\Gamma (n - k)}{(2\pi )^{n - k} } + \mathcal{O}_N (1)(\left| \tau \right|+1)\bigl(\left| \tau \right|^3 + 1\bigr)^{2N + 1} \exp \big( 7\bigl(\left| \tau \right|^3 + 1\bigr)^2 \big)\frac{\Gamma (n - N)}{(2\pi )^{n - N}}\label{eq13}
\end{align}
as $n\to+\infty$, provided that $N\ge 0$ is fixed and $\tau = o\big(n^{1/6}\big)$.

If $\tau$ is bounded, the error terms in \eqref{eq12} and \eqref{eq13} are of the same order of magnitude as the first neglected terms in the sums. Consequently, the inverse factorial series expansions \eqref{eq14} and \eqref{eq15} are valid. Under the weaker assumption that $\tau = o\big(n^{1/6}\big)$, it can readily be verified that the ratio of consecutive remainder terms asymptotically decays in magnitude as $n\to+\infty$. This confirms that the expansions \eqref{eq14} and \eqref{eq15} remain valid in the context of generalised asymptotic expansions.
\end{proof}

\begin{Remark} Starting from \eqref{eq8}, it is not difficult to show that
\begin{equation}\label{eq21}
C_{2n}(\tau ) = \exp\left(\frac{\tau ^2}{2}\right)\int_\tau^{+\infty} \exp \left(-\frac{t^2}{2}\right)D_{2n} (t)\d t
\end{equation}
for all $n\ge 0$. However, this formula cannot be employed to derive the inverse factorial series~\eqref{eq15} from \eqref{eq11} due to the divergence of the improper integrals involving the error term in \eqref{eq11} against \smash{$\exp\bigl(-\frac{t^2}{2}\bigr)$}. An alternative approach involves substituting \eqref{eq20} into \eqref{eq21} initially and then estimating the second sum. Yet, obtaining an estimate resulting in an error term comparable to that in \eqref{eq13} proves challenging. Consequently, it appears necessary to invoke the inverse factorial series of the coefficients in the uniform asymptotic expansion.
\end{Remark}

\section{Numerical examples}\label{section3}

In this section, we present numerical results that confirm the accuracy of the inverse factorial expansions given in Theorem \ref{thm1}. Truncating the series \eqref{eq14} and \eqref{eq15} after $N$ and $N+1$ terms, respectively, we consider the approximations
\begin{align}
& C_{2n - 1} (\tau ) \approx \frac{\Gamma (n)}{(2\pi )^{n + 1/2}}\sin \left( \frac{n }{2}\pi \right)\exp \left( \frac{\tau ^2 }{2} \right)\erf\big( 2^{ - \frac{1}{2}} \tau \big) \nonumber \\&\phantom{ C_{2n - 1} (\tau ) \approx}{} - \frac{1}{\pi }\sum_{k = 1}^N C_{2k - 1} (\tau )\sin \left( \frac{n - k}{2}\pi \right)\frac{\Gamma (n - k)}{(2\pi )^{n - k} } ,\label{eq26}
\\ &
C_{2n} (\tau ) \approx - \frac{\Gamma \left( n + \frac{1}{2} \right)}{(2\pi )^{n + 1} }\sin \left( \left( n + \frac{1}{2} \right)\frac{\pi }{2} \right)\exp \left( \frac{\tau ^2}{2} \right)\nonumber\\
&\phantom{C_{2n} (\tau ) \approx}{} - \frac{1}{\pi }\sum\limits_{k = 0}^N C_{2k} (\tau )\sin \left( \frac{n - k}{2}\pi \right)\frac{\Gamma (n - k)}{(2\pi )^{n - k} }.\label{eq27}
\end{align}
In Table \ref{table1}, we provide exact numerical values of $C_{2n-1}(\tau)$, along with the approximate values derived from \eqref{eq26}, and the corresponding errors for various combinations of $n$, $\tau$, and the truncation index $N$. Correspondingly, Table \ref{table2} displays exact numerical values of $C_{2n}(\tau)$, the corresponding approximations from \eqref{eq27}, and the associated errors for different values of $n$, $\tau$, and $N$. The coefficients $C_n(\tau)$ were computed using the algorithm outlined in Appendix~\ref{appendixb}. It is observed, particularly from Table~\ref{table2}, that these approximations are effective only when the magnitude of $\tau$ is much smaller than that of~$n$.

\begin{table*}[!ht]\centering
\begin{tabular}
[c]{ l r @{\,}c@{\,} l}\hline
 & \\ [-1.5ex]
 values of $n$, $\tau$ and $N$ & $n=50$, $\tau=\frac{1}{2}$, $N=10$ & & \\ [0.5ex]
 exact numerical value of $C_{2n-1}(\tau)\qquad$ & $-0.1605549419108870432185698$ & $\times$ & $10^{20}$ \\ [0.5ex]
 approximation \eqref{eq26} to $C_{2n-1}(\tau)$ & $-0.1605549417678948233999888$ & $\times$ & $10^{20}$ \\ [0.5ex]
 error & $-0.1429922198185809$ & $\times$ & $10^{11}$\\ [-1.5ex]
 & \\\hline
& \\ [-1.5ex]
 values of $n$, $\tau$ and $N$ & $n=50$, $\tau=\frac{1}{2}$, $N=20$ & & \\ [0.5ex]
 exact numerical value of $C_{2n-1}(\tau)$ & $-0.1605549419108870432185698$ & $\times$ & $10^{20}$ \\ [0.5ex]
 approximation \eqref{eq26} to $C_{2n-1}(\tau)$ & $-0.1605549419107926902148106$ & $\times$ & $10^{20}$ \\ [0.5ex]
 error & $-0.943530037592$ & $\times$ & $10^{7}$\\ [-1.5ex]
 & \\\hline
& \\ [-1.5ex]
 values of $n$, $\tau$ and $N$ & $n=100$, $\tau=1+\frac{3}{2}\im$, $N=20$ & & \\ [0.5ex]
 exact numerical value of $C_{2n-1}(\tau)$ & $0.6104582432674722845198873$ & $\times$ & $10^{75}$ \\ [0.1ex]
 & $-\im\, 0.3947300517906354580061330$ & $\times$ & $10^{75}$ \\ [0.5ex]
 approximation \eqref{eq26} to $C_{2n-1}(\tau)$ & $0.6104582432678493030809359$ & $\times$ & $10^{75}$ \\ [0.1ex]
 & $-\im\, 0.3947300517908665670802024$ & $\times$ & $10^{75}$ \\ [0.5ex]
 error & $-0.3770185610486$ & $\times$ & $10^{63}$\\ [0.1ex]
& $+\im\, 0.2311090740694$ & $\times$ & $10^{63}$ \\ [-1.5ex]
 & \\\hline
& \\ [-1.5ex]
 values of $n$, $\tau$ and $N$ & $n=100$, $\tau=1+\frac{3}{2}\im$, $N=40$ & & \\ [0.5ex]
 exact numerical value of $C_{2n-1}(\tau)$ & $0.6104582432674722845198873$ & $\times$ & $10^{75}$ \\ [0.1ex]
 & $-\im\, 0.3947300517906354580061330$ & $\times$ & $10^{75}$ \\ [0.5ex]
 approximation \eqref{eq26} to $C_{2n-1}(\tau)$ & $0.6104582432674722845199135$ & $\times$ & $10^{75}$ \\ [0.1ex]
 & $-\im\, 0.3947300517906354580061436$ & $\times$ & $10^{75}$ \\ [0.5ex]
 error & $-0.262$ & $\times$ & $10^{53}$\\ [0.1ex]
& $+\im\, 0.106$ & $\times$ & $10^{53}$ \\ [-1.5ex]
 & \\\hline
\end{tabular}
\caption{Approximations for $C_{2n-1}(\tau)$ with various $n$, $\tau$ and $N$, using \eqref{eq26}.}
\label{table1}
\end{table*}

\begin{table*}[!ht]\centering
\begin{tabular}
[c]{ l r @{\,}c@{\,} l}\hline
 & \\ [-1.5ex]
 values of $n$, $\tau$ and $N$ & $n=50$, $\tau=1$, $N=10$ & & \\ [0.5ex]
 exact numerical value of $C_{2n}(\tau)\qquad$ & $0.9780541202848348054115227$ & $\times$ & $10^{23}$ \\ [0.5ex]
 approximation \eqref{eq27} to $C_{2n}(\tau)$ & $0.9780541202841343977234399$ & $\times$ & $10^{23}$ \\ [0.5ex]
 error & $0.7004076880828$ & $\times$ & $10^{11}$\\ [-1.25ex]
 & \\\hline
& \\ [-1.5ex]
 values of $n$, $\tau$ and $N$ & $n=50$, $\tau=1$, $N=20$ & & \\ [0.5ex]
 exact numerical value of $C_{2n}(\tau)$ & $0.9780541202848348054115227$ & $\times$ & $10^{23}$ \\ [0.5ex]
 approximation \eqref{eq27} to $C_{2n}(\tau)$ & $0.9780541202848340460370285$ & $\times$ & $10^{23}$ \\ [0.5ex]
 error & $0.7593744942$ & $\times$ & $10^{8}$\\ [-1.5ex]
 & \\\hline
& \\ [-1.5ex]
 values of $n$, $\tau$ and $N$ & $n=100$, $\tau=2+\im$, $N=20$ & & \\ [0.5ex]
 exact numerical value of $C_{2n}(\tau)$ & $0.3119948787485535986155779$ & $\times$ & $10^{77}$ \\ [0.1ex]
 & $-\im\, 0.6504249040471427943527241$ & $\times$ & $10^{77}$ \\ [0.5ex]
 approximation \eqref{eq27} to $C_{2n}(\tau)$ & $0.3119948786903918092779561$ & $\times$ & $10^{77}$ \\ [0.1ex]
 & $-\im\, 0.6504249040964131036369408$ & $\times$ & $10^{77}$ \\ [0.5ex]
 error & $0.581617893376218$ & $\times$ & $10^{67}$\\ [0.1ex]
& $-\im\, 0.492703092842167$ & $\times$ & $10^{67}$ \\ [-1.5ex]
 & \\\hline
& \\ [-1.5ex]
 values of $n$, $\tau$ and $N$ & $n=100$, $\tau=2+\im$, $N=40$ & & \\ [0.5ex]
 exact numerical value of $C_{2n}(\tau)$ & $0.3119948787485535986155779$ & $\times$ & $10^{77}$ \\ [0.1ex]
 & $-\im\, 0.6504249040471427943527241$ & $\times$ & $10^{77}$ \\ [0.5ex]
 approximation \eqref{eq27} to $C_{2n}(\tau)$ & $0.3119948786894443063970208$ & $\times$ & $10^{77}$ \\ [0.1ex]
 & $-\im\, 0.6504249040965108927273969$ & $\times$ & $10^{77}$ \\ [0.5ex]
 error & $0.59109292218557109$ & $\times$ & $10^{67}$\\ [0.1ex]
& $-\im\, 0.49368098374672863$ & $\times$ & $10^{67}$ \\ [-1.5ex]
 & \\\hline
\end{tabular}
\caption{Approximations for $C_{2n}(\tau)$ with various $n$, $\tau$ and $N$, using \eqref{eq27}.}\label{table2}
\end{table*}

\section{Concluding remarks}\label{section4}

We studied the asymptotic behaviour of the coefficients $C_n(\tau)$ as $n\to +\infty$ appearing in the asymptotic expansion of the incomplete gamma function within the transition region. We established inverse factorial series expansions for $C_n(\tau)$ that exhibit a resurgence property--meaning the coefficients of these series are once again $C_k(\tau)$. To the best of our knowledge, there has been no investigation into the resurgence properties of asymptotic expansions in transition regions in the existing literature prior to this paper.

The example of the incomplete gamma function is specific in a sense, as its derivative can be expressed in terms of the gamma function, whose resurgence properties are well understood. It would be interesting to develop a more general method for studying the resurgence properties of transitional asymptotic expansions that lack such special properties. Two important examples include the transitional expansions of the Bessel functions \cite[Section~10.19\,(iii)]{DLMF} and the Coulomb functions \cite[Section~33.12\,(i)]{DLMF}, both of which do not exhibit these special characteristics. We anticipate that the Borel transform will play a crucial role in such investigations. Additionally, exploring beyond the asymptotic study of coefficients and developing a hyperasymptotic theory for transitional expansions, similar to those existing for standard (non-uniform) asymptotic expansions \cite{Bennett2018,OldeDaalhuis1998b}, would be of great interest.

\appendix

\section{Auxiliary results}\label{appendixa}

In this appendix, we provide a concise overview of the resurgence properties of the asymptotic expansion of the gamma function and the uniform asymptotic expansion of the incomplete gamma function. Our focus is on the asymptotic behaviour of the coefficients of these expansions, as they play a significant role in proving our main result.

We define the scaled gamma function $\Gamma^\ast(a)$ through the relation
\begin{equation}\label{eq24}
\Gamma^\ast (a) = \frac{\Gamma (a)}{\sqrt {2\pi } a^{a - 1/2} \e^{ - a} }
\end{equation}
valid for $|\arg a|<\pi$. It is a well-established fact that both the scaled gamma function and its reciprocal possess asymptotic expansions given by
\begin{equation}\label{eq25}
\Gamma^\ast (a) \sim \sum_{n = 0}^\infty ( - 1)^n \frac{\gamma _n }{a^n } ,\qquad \frac{1}{\Gamma^\ast (a)} \sim \sum_{n = 0}^\infty \frac{\gamma _n }{a^n }
\end{equation}
as $a\to \infty$ in the sector $|\arg a| \le \pi-\delta<\pi$ (see, e.g., \cite[Section~3.6]{Temme1996}). Here, the $\gamma_n$ are known as the Stirling coefficients, with the initial values being
\[
\gamma _0 = 1,\qquad\gamma _1 = - \frac{1}{12},\qquad\gamma _2 = \frac{1}{288},\qquad\gamma _3 = \frac{139}{51840},\qquad\gamma _4 = - \frac{571}{2488320}.
\]
Note that the asymptotic expansion of the reciprocal scaled gamma function involves the same coefficients as that of the scaled gamma function, but with different signs of the coefficients with odd index. For an explanation of this phenomenon, refer to, for instance, \cite[p.~63]{Temme1996}. The asymptotic behaviour of the Stirling coefficients is described by the inverse factorial series
\begin{equation}\label{eq18}
\gamma _n \sim -\frac{1}{\pi}\sum_{k= 0}^{\infty} \gamma_k \sin\left( \frac{n -k}{2}\pi \right)\frac{\Gamma (n - k)}{(2\pi )^{n - k}}
\end{equation}
as $n\to+\infty$. Notably, the coefficients within the inverse factorial series are once again the Stirling coefficients, highlighting the resurgence property of the gamma function. The expansion \eqref{eq18} was established by Boyd \cite[equation~(3.34)]{Boyd1994}, although it had previously appeared without proof in an earlier paper by Rosser \cite[equation~(97)]{Rosser1955}.

Moving on to the incomplete gamma function $Q(a,z)$, define
\[
\lambda = \frac{z}{a},\qquad \eta = (2(\lambda - 1 - \log \lambda ))^{\frac{1}{2}} ,
\]
where the branch of the square root is continuous and satisfies $\eta(\lambda)\sim \lambda-1$ as $\lambda\to 1$. Temme~\cite{Temme1979} established that the incomplete gamma function admits the uniform asymptotic expansion
\begin{equation}\label{eq23}
Q(a,z) \sim \frac{1}{2}\erfc\bigg( \eta \sqrt{\frac{a}{2}} \bigg) + \frac{1}{\sqrt{2\pi a} }\exp \left(- \eta ^2 \frac{a}{2}\right)\sum_{n = 0}^\infty \frac{c_n (\eta )}{a^n},
\end{equation}
as $a\to \infty$ in the sector $|\arg a| \le \pi-\delta<\pi$, uniformly with respect to $\lambda$ in the sector $|\arg \lambda| \le 2\pi-\delta<2\pi$. The coefficients $c_n (\eta )$ are complicated functions of their argument with removable singularities at $\eta=0$. The resurgence properties of this expansion were studied by Olde Daalhuis~\cite{OldeDaalhuis1998a}. He demonstrated that the coefficients $c_n (\eta)$ possess inverse factorial series of the following form as $n\to+\infty$, for $\eta \in \mathcal{N}$:
\begin{gather}\label{eq22}
\begin{split}
c_n (\eta ) \sim \; & \frac{( - 1)^{n + 1} }{2\sqrt {2\pi }}\Gamma \left( n + \frac{1}{2} \right)\left( \frac{1}{2}\eta ^2 + 2\pi \im \right)^{ - n - \frac{1}{2}} I_{1 + \frac{\eta ^2 }{4\pi \im}} \left( n + \frac{1}{2},\frac{1}{2} \right)
\\ & + \frac{( - 1)^{n + 1} }{2\sqrt {2\pi } }\Gamma \left( n + \frac{1}{2} \right)\left( \frac{1}{2}\eta ^2 - 2\pi \im \right)^{ - n - \frac{1}{2}} I_{1 - \frac{\eta ^2 }{4\pi \im}} \left( n + \frac{1}{2},\frac{1}{2} \right)
\\ & - \frac{1}{\pi}\sum_{k = 0}^\infty c_k (\eta )\sin \left( \frac{n - k}{2}\pi \right)\frac{\Gamma (n - k)}{(2\pi )^{n - k} },
\end{split}
\end{gather}
where
\[
\mathcal{N} = \left\{ \eta \in \mathbb{C} \right\}\setminus \big\{ \eta = \alpha + \im \beta \mid \alpha \beta = \pm 2\pi ,\; \alpha \le - \sqrt {2\pi}\big\}
\]
is the image of the sector $|\arg \lambda|<2\pi$ under the mapping $\eta(\lambda)$. In \eqref{eq22}, $I_x(a,b)$ denotes the incomplete beta function \cite[Section~8.17]{DLMF}. In the special case that $\eta=0$, the expansion \eqref{eq22} simplifies to the inverse factorial series
\begin{equation}\label{eq19}
c_n (0) \sim -\frac{\Gamma \left( n + \frac{1}{2} \right)}{(2\pi )^{n + 1}}\sin\left( \left( n + \frac{1}{2} \right)\frac{\pi}{2} \right) - \frac{1}{\pi}\sum_{k = 0}^\infty c_k (0)\sin \left( \frac{n - k}{2}\pi \right)\frac{\Gamma (n - k)}{(2\pi )^{n - k} }
\end{equation}
as $n\to+\infty$.

An alternative asymptotic expansion for $c_n (\eta)$ was established in the paper \cite{Dunster1998}, but it does not exhibit the resurgence property.

\section[The computation of the coefficients C\_n(tau)]{The computation of the coefficients $\boldsymbol{C_n(\tau)}$}\label{appendixb}

In this appendix, we present an efficient method for computing the coefficients $C_n(\tau)$. We~substitute the polynomial expansion
\begin{equation}\label{eq29}
C_n(\tau) = \sum_{k=0}^{3n+2} c_{n,k}\tau^k
\end{equation}
into \eqref{eq32}. As a result, this substitution yields the following recurrence relation for the coefficients~$c_{n,k}$:
\begin{equation}\label{eq30}
c_{n,k}=(k+2)c_{n,k+2}+(k+1) c_{n-1,k+1}-\frac{2k}{k+1}c_{n-1,k-1}+\frac{1}{k+1}c_{n-1,k-3}.
\end{equation}
Given that $C_0 (\tau ) = \frac{1}{3}\tau^2 -\frac{1}{3}$, we deduce that
\begin{equation}\label{eq31}
c_{n,3n+2}=\frac{1}{3^{n+1}(n+1)!},\qquad c_{n,3n+1}=0
\end{equation}
holds for $n=0$. Using the recurrence relation \eqref{eq30}, we can demonstrate that \eqref{eq31} remains valid for $n\geq1$. Initiating with the first equation in \eqref{eq31} and subsequently setting $k=3n, 3n-2, \allowbreak 3n-4, \dots$, enables the computation of non-zero coefficients in the expansion \eqref{eq29}. Moreover, we establish that $c_{n,k}=0$ for $k=3n+1, 3n-1, 3n-3, \dots$. The following Wolfram Mathematica~\cite{Mathematica} code implements the recursive method:
\[
\begin{aligned}
&\bm{c[0,2] = 1/3;} \\
&\bm{c[0,0] = -1/3;} \\
&\bm{c[0,4] = 0;} \\
&\bm{c[0,-2] = 0;} \\
&\bm{\textbf{NN} = 10;} \\
&\bm{\textbf{For}[n = 1, n \leq \textbf{NN}, n\textbf{++},} \\
&\quad \bm{c[n,3*n+2] = 1/3{}^{\wedge}(n+1)/(n+1)!;} \\
&\quad \bm{c[n,3*n+4] = 0;} \\
&\quad \bm{\textbf{For}[k = 3*n, k \geq 0, k-=2,} \\
&\quad \quad \bm{c[n,k] = (k+2)*c[n,k+2] + (1/(k+1))*c[n-1,k-3]} \\
&\quad \quad \quad \bm{- (2k/(k+1))*c[n-1,k-1] + (k+1)*c[n-1,k+1];];} \\
&\quad \bm{c[n,k] = 0;} \\
&\quad \bm{c[n,k-2] = 0;];} \\
&\bm{\textbf{Ccoefficient}[\textbf{n$_{-}$},\textbf{t$_{-}$}]\,\textbf{:=}\,\textbf{Sum}[c[n,3*n+2-2*k]*t{}^{\wedge}(3*n+2-2*k),}\\
& \hphantom{\textbf{Ccoefficient}[\textbf{n$_{-}$},\textbf{t$_{-}$}]\,\textbf{:=}\,} \bm{\{k,0,3/2*n+1\}];} \\
&\bm{\textbf{Table}[\textbf{Ccoefficient}[n,t],\{n,0,\textbf{NN}\}]}
\end{aligned}
\]
This code generates a list of coefficients $C_n(t)$ for $0\le n\le 10$. Adjusting the value of the variable \textbf{NN} allows for computing a different number of coefficients.

\subsection*{Acknowledgements} The author's research was supported by the JSPS KAKENHI Grants No.~JP21F21020 and No.~22H01146. The author is grateful to the referees for their comprehensive evaluation of the manuscript.

\pdfbookmark[1]{References}{ref}
\LastPageEnding

\end{document}